\documentclass[11pt,reqno]{amsart}

\usepackage[a4paper,margin=28mm]{geometry}
\usepackage{color}
\usepackage[T1]{fontenc}
\usepackage{lmodern}
\usepackage[nopatch=footnote]{microtype}
\usepackage{amsmath,amssymb,mathtools,mathrsfs}
\usepackage[hidelinks]{hyperref}

\numberwithin{equation}{section}

\newtheorem{theorem}{Theorem}[section]
\newtheorem{proposition}[theorem]{Proposition}

\newtheorem{corollary}[theorem]{Corollary}

\theoremstyle{definition}

\theoremstyle{remark}
\newtheorem{remark}[theorem]{Remark}

\newcommand{\Teich}{\mathcal T_g}
\newcommand{\CH}{\mathbb C\mathbb H^2}
\newcommand{\PU}{\operatorname{PU}(2,\,1)}
\newcommand{\X}{\mathcal X_0^{\mathrm{good}}}
\newcommand{\CC}{\mathcal C_0^{\mathrm{cc}}}
\newcommand{\OO}{\mathcal O}
\newcommand{\RR}{\mathbb R}
\newcommand{\CCplx}{\mathbb C}
\newcommand{\Sym}{\operatorname{Sym}}
\newcommand{\Ext}{\operatorname{Ext}}
\newcommand{\Tot}{\operatorname{Tot}}
\newcommand{\Crit}{\operatorname{Crit}}
\newcommand{\Area}{\operatorname{Area}}
\newcommand{\ram}{\operatorname{ram}}
\newcommand{\dd}{\,\mathrm d}
\newcommand{\Mimm}{\mathscr M_0^{\mathrm{imm}}}
\newcommand{\Hgood}{\mathscr H_0^{\mathrm{good}}}

\title[Ramification and finiteness of minimal surfaces]
{Ramification and Finiteness of Equivariant Minimal Surfaces}
\author[I. Biswas]{Indranil Biswas}
\address{Department of Mathematics, Shiv Nadar University,
Dadri 201314, Uttar Pradesh, India}
\email{indranil.biswas@snu.edu.in}
\author[S. Guin]{Satyajit Guin}
\address{Department of Mathematics and Statistics,
Indian Institute of Technology Kanpur, Kanpur 208016, India}
\email{sguin@iitk.ac.in}
\author[P. Kumar]{Pradip Kumar}
\address{Department of Mathematics, Shiv Nadar University,
Dadri 201314, Uttar Pradesh, India}
\email{pradip.kumar@snu.edu.in}
\date{}

\subjclass[2020]{Primary 53C42; Secondary 53C43, 58E20, 14H60, 20H10}

\keywords{
Equivariant minimal surface, complex hyperbolic plane,
Higgs bundle, Toledo invariant}
\begin{document}

\begin{abstract}
Let $S$ be a closed oriented surface of genus $g$, with $g\ \geq \ 2$, and let
$\X$ be the good locus of $\PU$ representations with Toledo
invariant zero. For $1\ \leq \ b\ \leq\ 2g-3$, we show that representations
admitting an equivariant minimal map of total ramification at least
$b$, for some marked complex structure, form a countable union of
compact null sets of Hausdorff dimension at most $16g-16-2b$.
For larger $b$, this locus is empty. Consequently, on a dense
$G_\delta$ subset of full measure in $\X$, every nonconstant
equivariant minimal map is an immersion, for every marked complex
structure. On the convex cocompact locus, a dense $G_\delta$ subset
of full measure consists of representations admitting a finite
nonempty set of marked equivariant minimal immersions.
\end{abstract}
\maketitle

\section{Introduction}

\label{sec:introduction}
Let $S$ be a closed connected oriented smooth surface of genus
$g\,\geq\,2$, and put $\Gamma\,:=\,\pi_1(S,\,s_0)$ for a fixed
base point $s_0$. We study minimal maps from the universal cover
$\widetilde S$ into the complex hyperbolic plane $\CH$ which are
equivariant for a representation
$\rho\,:\,\Gamma\,\longrightarrow\,\PU$. The complex structure on
$S$ is allowed to vary. Our aim is to investigate the representations
for which such a map can have branch points, and to prove finiteness
of the set of marked minimal immersions for generic convex cocompact
holonomy with Toledo invariant zero.

\smallskip

The relationship between surface group representations and holomorphic
Higgs bundles is given by nonabelian Hodge theory. The work of Hitchin,
Donaldson, Corlette, and Simpson provides the harmonic map and Higgs
bundle descriptions used here \cite{Hit87,Don87,Cor88,Sim92}. For the
complex hyperbolic plane, Loftin and McIntosh described equivariant
minimal surfaces which are neither holomorphic nor antiholomorphic by
two effective divisors and an extension of holomorphic line bundles
\cite[Theorem~2.3]{LM19}. The divisors give
the zeros of the two components of the Higgs field, and their common
part records branching. McIntosh subsequently proved smoothness of
the moduli space of minimal immersions which are neither holomorphic
nor antiholomorphic, and determined its connected components
\cite[Theorems~1.1 and~1.2]{McI23}. These descriptions provide the
starting point for our argument.

\medskip

We specify the setting. Let $\Teich$ be the space of complex
structures on $S$ compatible with its orientation, modulo
orientation-preserving diffeomorphisms isotopic to the identity.
This is the Teichm\"uller space of $S$, and it has complex dimension
$3g-3$. For $J\,\in\,\Teich$, write $S_J$
for the corresponding Riemann surface and $K_J$ for its canonical
line bundle. We equip $\CH$ with the homogeneous K\"ahler metric
$g_{\CH}$ of holomorphic sectional curvature $-4$ and denote its
K\"ahler form by $\omega$. The real sectional curvatures then lie
in $[-4,\,-1]$.

A representation $\rho\,:\,\Gamma\,\longrightarrow\,\PU$ is called \emph{good} if it is
reductive and irreducible and the centralizer of $\rho(\Gamma)$ in
$\PU$ is trivial. In our normalization of holomorphic sectional curvature of $\CH$, the Toledo invariant is given by 
\begin{equation}
\label{eq:toledo}
  \tau(\rho)\,\ =\,\ \frac{2}{\pi}\int_S f^*\omega,
\end{equation}
where $f\,:\,\widetilde S\,\longrightarrow\,\CH$ is any smooth
$\rho$-equivariant map. The form $f^*\omega$ descends to $S$, and
the integral is independent of the choice of $f$
\cite[Section~2]{LM19}. Consider the reductive character variety
$$
  \mathcal R(\Gamma,\,\PU)
  \,:=\,\operatorname{Hom}_{\mathrm{red}}(\Gamma,\,\PU)/\PU,
$$
where $\operatorname{Hom}_{\mathrm{red}}$ denotes reductive
homomorphisms and the quotient is by conjugation. Denote the locus
\begin{equation}
\label{eq:good-locus}
  \X\,:=\,
  \bigl\{[\rho]\,\in\,\mathcal R(\Gamma,\,\PU)
  \,\big\vert\,\tau(\rho)\,=\,0,\ \rho\text{ is good}\bigr\}.
\end{equation}
This is a smooth real manifold of dimension
\begin{equation}
\label{eq:ambient-dim}
  N\,:=\,\dim_{\RR}\X\,=\,16g-16,
\end{equation}
by the standard deformation theory of surface-group
representations \cite[Sections~1.2--1.5]{Gol84}.
The dimension calculation is briefly recalled in
Section~\ref{subsec:character-proof}.
\medskip 

For each $(J,\,[\rho])\,\in\,\Teich\times\X$, there is a unique
$\rho$-equivariant harmonic map $f_{J,\rho}$. It is nonconstant by
irreducibility. We call it an \emph{equivariant minimal map} when it
is weakly conformal, allowing isolated branch points. Its branch
divisor and total ramification are
$$
  B(f)\,=\,\sum_{p\in S}\operatorname{ord}_p(df)\,p,
  \qquad
  \ram(f)\,=\,\deg B(f),
$$
where $\operatorname{ord}_p(df)$ is the branching order. Their
description in terms of Higgs fields is given in
Proposition~\ref{prop:extension}. A map with $B(f)\,=\,0$ is a
minimal immersion.

We retain the marking throughout. More precisely, two triples $(J,\,\rho,\,f)$ and $(J',\,\rho',\,f')$ are identified if there exist an orientation-preserving diffeomorphism $\phi\,:\,S\,\to\,S$ isotopic to the identity, a lift $\widetilde{\phi}:\widetilde S\to\widetilde S$, and an element $A\,\in\,\PU$ such that
$$
\begin{aligned}
J' &= \phi_*J,\\
\rho'(\gamma) &= A\,\rho\bigl(\widetilde{\phi}^{-1}\gamma\widetilde{\phi}\bigr)A^{-1},
\qquad \gamma\in\Gamma,\\
f' &= A\circ f\circ\widetilde{\phi}^{-1}.
\end{aligned}
$$
Here $\Gamma$ is identified with the group of deck transformations of $\widetilde S$, and $\phi_*J$ denotes the complex structure transported by $\phi$. By uniqueness of the harmonic map,
the space $\Mimm$ of these marked minimal immersions is a subspace
of $\Teich\times\X$, with holonomy projection
\begin{equation}
\label{eq:immersion-projection}
  \pi_{\mathrm{imm}}\,:\,\Mimm\,\longrightarrow\,\X,
  \qquad (J,\,[\rho])\,\longmapsto\,[\rho].
\end{equation}
We use the subspace topology, as in \cite[Section~2]{McI23}.

For an integer $b\,\geq\,1$, let
\begin{equation}
\label{eq:exceptional}
  \mathcal B_b\,:=\,
  \left\{[\rho]\,\in\,\X\,\middle\vert\,
  \begin{array}{l}
    \text{some marked equivariant minimal map }f\\
    \text{with holonomy }[\rho]\text{ has }\ram(f)\,\geq\,b
  \end{array}\right\}.
\end{equation}
We compute
Hausdorff dimension with respect to any smooth Riemannian metric on
$\X$. A subset is called
\emph{null} if it has zero volume for every smooth metric, and
\emph{meagre} if it
is a countable union of nowhere dense subsets. As usual, $F_\sigma$
and $G_\delta$ mean a countable union of closed subsets and a countable
intersection of open subsets, respectively.

\begin{theorem}
\label{thm:ramification}
For every integer $1\,\leq\,b\,\leq\,2g-3$,
\begin{equation}
\label{eq:main-bound}
  \dim_{\mathrm H}\mathcal B_b\,\leq\,16g-16-2b\,=\,N-2b.
\end{equation}
The subset $\mathcal B_b$ is a countable union of compact null sets.
In particular, it is $F_\sigma$, null, and meagre in $\X$.
If $b\,>\,2g-3$, then $\mathcal B_b\,=\,\varnothing$.
\end{theorem}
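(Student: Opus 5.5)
The plan is to parametrize the branched minimal maps by Higgs-bundle data, following Loftin--McIntosh, and then bound dimensions by a Lipschitz-image argument. By Proposition~\ref{prop:extension}, a $\rho$-equivariant minimal map $f$ on $S_J$ with $\tau(\rho)=0$ that is neither holomorphic nor antiholomorphic corresponds to a $\mathrm{U}(2,1)$-type Higgs bundle. Its Higgs field has two components, $\beta$ and $\gamma$, with zero divisors $D_\beta$ and $D_\gamma$. Conformality forces $\beta\gamma=0$ as a quadratic differential, and the branch divisor is $B(f)=\gcd(D_\beta,D_\gamma)$. When $\tau=0$, the degrees are balanced: $\deg D_\beta+\deg D_\gamma$ is fixed (equal to $4g-4$ up to the normalization of the proposition), and each degree is at most $2g-2$. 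A holomorphic or antiholomorphic map would have nonzero Toledo invariant, so it is excluded. The common part $B$ then has $\deg B\le \min(\deg D_\beta,\deg D_\gamma)$. The refined bound $\deg B\le 2g-3$ should come from stability: if $\deg B\ge 2g-2$, the line subbundles determined by $\beta$ and $\gamma$ violate the strict stability forced by goodness, or the extension class is forced to split, making $\rho$ reducible. This gives $\mathcal B_b=\varnothing$ for $b>2g-3$. I would check this edge case carefully against the degree formulas in Proposition~\ref{prop:extension}.

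For $1\le b\le 2g-3$, I would build a parameter space. Over $\Teich$, it consists of the data $(J,D_\beta',D_\gamma',B,L,\xi)$, where $B\in\Sym^{b'}S_J$ with $b'\ge b$, the residual divisors $D_\beta', D_\gamma'$ lie in symmetric products, $L$ is a line bundle in the appropriate Picard torus, and $\xi$ is a projectivized extension class. The count goes as follows. McIntosh's smooth moduli space of (possibly branched) minimal maps has real dimension $N+6g-6$; equivalently, $\Mimm$ fibres over $\X$ with $6g-6$ extra parameters from $\Teich$. Imposing a common divisor of degree $b'$ removes complex dimension $b'$, since the divisor now lies in both $D_\beta$ and $D_\gamma$ instead of moving independently. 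The stratum with $\ram\ge b$ is therefore a finite union of images of smooth manifolds, or of semialgebraic-type families fibred over $\Teich$, of real dimension at most $(N+6g-6)-2b$. I would prove this by writing each stratum as the image of a smooth map from a manifold of that dimension, using the explicit coordinates from Proposition~\ref{prop:extension} and $\Sym^{k}$ of the universal curve. I expect this to be the main obstacle: checking that the extension data genuinely lose $b$ complex parameters. That is, the space of extension classes with prescribed divisors has the expected dimension via an $\Ext^1$ computation, and the space does not jump. I would first try bounding $\dim\Ext^1$ from above by Riemann--Roch plus vanishing of $H^0$, which comes from degree and stability.

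Next comes the gain of $6g-6$. The holonomy map to $\X$ must lose the $\Teich$ directions. I would use that for fixed $[\rho]$ the set of $J$ carrying a minimal equivariant map is the critical set of the energy functional $E_\rho$ on $\Teich$. Instead of a fibre-dimension argument, I would use the smooth structure: McIntosh's moduli space (branched version) of dimension $N$ maps smoothly to $\X$. This is the count where $\Teich$ is compensated by the conformality condition, which consists of $6g-6$ real equations given by the vanishing of the Hopf differential. Hence the stratum with $\ram\ge b$ is a countable union of compact pieces of smooth $(N-2b)$-dimensional manifolds, and its image is under locally Lipschitz (smooth) maps into $\X$. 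Smooth images do not raise Hausdorff dimension, which gives $\dim_{\mathrm H}\mathcal B_b\le N-2b$. Exhausting by compact sets, using $\sigma$-compactness of the source, shows that $\mathcal B_b$ is a countable union of compact sets of dimension $<N$, hence null. Closedness of each compact image gives $F_\sigma$. Since $\mathcal B_b$ is null, it has empty interior, so each compact piece is nowhere dense and $\mathcal B_b$ is meagre.
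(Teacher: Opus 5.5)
Your outer framework is the paper's: Loftin--McIntosh data with a common subdivisor of degree $b$, a smooth holonomy map, Lipschitz images with a compact exhaustion, and $\ram(f)\le d$ for emptiness. The genuine gap is the dimension count, which is the whole content of \eqref{eq:main-bound}; as you set it up, it does not produce $N-2b$. You say the space of (branched) minimal maps has real dimension $N+6g-6$, and that $\Mimm$ fibres over $\X$ with $6g-6$ parameters from $\Teich$. You then recover $6g-6$ through a separate ``gain'' from the vanishing of the Hopf differential. In fact $\dim_{\RR}\Mimm=N=\dim_{\RR}\X$ (Proposition~\ref{prop:analytic-inputs}), so there are no $(6g-6)$-dimensional fibres to remove. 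The extension description of Proposition~\ref{prop:extension} parametrizes only conformal harmonic maps, since $\gamma\beta=0$ is built in. Moreover $J\in\Teich$ is already one of its coordinates, so no further equations remain to impose. A parameter space of real dimension $N+6g-6-2b$ would give only the vacuous bound $\dim_{\mathrm H}\mathcal B_b\le N+6g-6-2b$. The smooth ``branched version'' of McIntosh's moduli space that you invoke is also not what his theorem provides, since it concerns immersions. In any case the strata need an explicit parametrization.

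Your proposed parametrization $(J,D_\beta',D_\gamma',B,L,\xi)$ is itself wrong in two ways.
\begin{enumerate}
\item The line bundles are not free parameters. They are $K_J^{-1}(D_1)$ and $K_J(-D_2)$, fixed by $(J,D_1,D_2)$, so a Picard-torus parameter $L$ adds $g$ spurious complex dimensions.
\item The class $\xi$ must range over the whole vector space $H^1(S_J,K_J^{-2}(D_1+D_2))$, not its projectivization. Replacing $\Phi$ by $t\Phi$ with $t\in\CCplx^*$ replaces $\xi$ by $t^{-2}\xi$. This action changes the Higgs bundle and hence the holonomy, so a projectivized family does not cover $\mathcal B_b$.
\end{enumerate}
The correct count is the paper's $\mathscr F_{d,b}$: the pullback of the extension bundle $\mathscr E_d$ to $\Sym^b\times_{\Teich}\Sym^{d-b}\times_{\Teich}\Sym^{d-b}$. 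It has constant rank $5g-5-2d$ because $\deg K_J^{-2}(D_1+D_2)\le -2$ forces $h^0=0$. This gives $(3g-3)+b+2(d-b)+(5g-5-2d)=8g-8-b$ complex dimensions, i.e.\ real dimension $N-2b$ directly. For $b=0$ this matches $\dim_{\CCplx}\Mimm=8g-8$. You also need smoothness of the holonomy map on this space. The paper gets it from a tautological extension over $\Tot(\mathscr E_d)$, which gives a holomorphic classifying map to the universal Higgs moduli, composed with $\mathcal N$.

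Finally, your degree facts in the emptiness step are off. Since $\deg V=\deg D_1-\deg D_2$, the condition $\tau=0$ forces both divisors to have the same degree $d$; their sum is not fixed at $4g-4$. The bound $d\le 2g-3$ comes from stability, because $K_J^{-1}(D_1)=\ker\gamma$ is $\Phi$-invariant of degree $d-2g+2$. This is already recorded in Proposition~\ref{prop:extension} and gives $\ram(f)\le d\le 2g-3$ at once.
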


\begin{corollary}
\label{cor:generic-immersion}
The complement $\X\setminus\mathcal B_1$ is a dense $G_\delta$
subset of full measure in $\X$. For each representation in this
complement, every nonconstant equivariant weakly conformal harmonic
map is an immersion, for every $J\,\in\,\Teich$.
\end{corollary}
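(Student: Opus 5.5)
The corollary follows from Theorem~\ref{thm:ramification} with $b=1$, together with some point-set topology. We assume $g\ge 2$, so that $2g-3\ge 1$ and the theorem applies to $b=1$.

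First, by Theorem~\ref{thm:ramification}, $\mathcal B_1=\bigcup_{k}K_k$ is a countable union of compact null sets $K_k\subset\X$. Each $K_k$ is compact in the Hausdorff manifold $\X$, hence closed, and its complement $U_k:=\X\setminus K_k$ is open. Therefore
$$\X\setminus\mathcal B_1=\bigcap_k U_k$$
is a $G_\delta$ set.

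Second, we show that each $U_k$ is dense. A closed null set has empty interior, because a nonempty open subset of a manifold has positive Riemannian volume. So each $U_k$ is open and dense. Since $\X$ is a smooth manifold, it is locally compact Hausdorff and hence a Baire space. It follows that the countable intersection $\bigcap_k U_k$ is dense.

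Third, the complement has full measure. A countable union of null sets is null for each smooth metric, by countable subadditivity of the Riemannian volume measure. Hence $\mathcal B_1$ is null and its complement has full measure. This is independent of the metric, since all smooth volume measures are mutually absolutely continuous.

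Finally, let $[\rho]\notin\mathcal B_1$ and $J\in\Teich$, and let $f$ be a nonconstant $\rho$-equivariant weakly conformal harmonic map on $(\widetilde S,J)$.
\begin{itemize}
\item By uniqueness of the equivariant harmonic map for good $\rho$, $f=f_{J,\rho}$.
\item Being weakly conformal, $f$ is an equivariant minimal map in the sense of the introduction.
\item If $f$ were not an immersion, it would have a branch point. Since $f$ is nonconstant and weakly conformal, its branch points are isolated and of finite order, so $\ram(f)\ge 1$ and $[\rho]\in\mathcal B_1$, a contradiction.
\end{itemize}
Hence $B(f)=0$ and $f$ is an immersion.

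The only subtle points are the Baire property of $\X$ and the fact that branch points of a nonconstant weakly conformal harmonic map are isolated. The latter is classical: the Hopf differential vanishes, and $\partial f$ is locally a holomorphic section up to a nonvanishing factor. The paper already takes this as part of its definition of branching.
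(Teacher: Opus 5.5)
Your proposal is correct and follows the paper's proof: the $G_\delta$ and full-measure claims come from $\mathcal B_1$ being a countable union of compact null sets, and the immersion claim is just the definition of $\mathcal B_1$. Your extra remarks on uniqueness of $f_{J,\rho}$ and isolatedness of branch points only spell out what the paper leaves implicit. The one difference is the density step: you apply the Baire property to the open dense sets $U_k$, whereas the paper observes directly that a full-measure set is dense because every nonempty open subset of $\X$ has positive volume, so the Baire step is not actually needed.
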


\medskip 
We now restrict to convex cocompact holonomy. A representation is
called \emph{convex cocompact} if it
is an isomorphism onto a discrete subgroup acting cocompactly on a
nonempty invariant closed convex subset of $\CH$. Put
\begin{equation}\label{eq:cc-locus}
  \CC\ :=\ \bigl\{[\rho]\,\in\,\X\,\,\,\big\vert\,\,\,
  \rho\text{ is convex cocompact}\bigr\}.
\end{equation}
This locus is open by the rank-one characterization and openness of
Anosov representations \cite[Theorems~5.13 and~5.15]{GuiW12}.

The energy of the harmonic map defines a function on $\Teich$,
\begin{equation}
\label{eq:energy}
  E_\rho(J)\ =\ \frac12\int_S
  |df_{J,\rho}|^2_{\sigma_J,g_{\CH}}\dd A_{\sigma_J},
\end{equation}
where $\sigma_J$ is any metric in the conformal class $J$.
Conformal invariance in dimension two makes this independent of
$\sigma_J$. The first-variation formula identifies its critical
points with equivariant minimal maps \cite{DW07}. We prove:

\begin{theorem}
\label{thm:finiteness}
There is a subset $\mathcal G\,\subset\,\CC\setminus\mathcal B_1$
which is dense and $G_\delta$ in $\CC$ and has full measure there,
such that, for every $[\rho]\,\in\,\mathcal G$,
\begin{equation}
\label{eq:finite-fiber}
  1\,\leq\,\#\Crit(E_\rho)
  \,=\,\#\pi_{\mathrm{imm}}^{-1}([\rho])\,<\,\infty.
\end{equation}
In particular, the set of marked $\rho$-equivariant minimal
immersions is finite and nonempty.
\end{theorem}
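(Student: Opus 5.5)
The proof has three parts: existence from properness of the energy, immersion from Corollary~\ref{cor:generic-immersion}, and finiteness from a regular-value argument for the holonomy projection.

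\textbf{Existence and the identity of the two counts.} For convex cocompact $\rho$ the energy $E_\rho$ is proper on $\Teich$. The orbit map is a quasi-isometric embedding, so the energy is bounded below by a multiple of the length of the shortest closed geodesic in any pinching curve; this is the standard properness argument. Hence $E_\rho$ attains a minimum, and $\Crit(E_\rho)\neq\varnothing$. By the first-variation formula \cite{DW07}, $J$ is critical exactly when $f_{J,\rho}$ is weakly conformal. If $[\rho]\notin\mathcal B_1$, every such map is unbranched, so $J\mapsto(J,[\rho])$ is a bijection from $\Crit(E_\rho)$ onto $\pi_{\mathrm{imm}}^{-1}([\rho])$. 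This gives the equality and the lower bound $1\leq\#\Crit(E_\rho)$ in \eqref{eq:finite-fiber} for every $[\rho]\in\CC\setminus\mathcal B_1$.

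\textbf{Finiteness.} By McIntosh \cite{McI23}, the relevant components of $\Mimm$ (non-holomorphic, non-antiholomorphic, Toledo zero) form a smooth manifold. A parameter count via the Loftin--McIntosh data, two divisors plus an extension class, shows its dimension is $N=16g-16$; this should agree with $\dim\Teich\times\X$ minus the $6g-6$ real conformality conditions. The holomorphic and antiholomorphic cases have $\tau\neq0$ or are non-good, so they do not occur. Thus $\pi_{\mathrm{imm}}$ is a smooth map between manifolds of equal dimension, and $\Mimm$ is second countable. By Sard's theorem the critical values of $\pi_{\mathrm{imm}}$ form a null set, and this set is $F_\sigma$ and meagre. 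To see this, exhaust $\Mimm$ by countably many compacts $K_n$; then $\pi_{\mathrm{imm}}(\operatorname{Crit}\cap K_n)$ is compact and null, hence nowhere dense.

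At a regular value $[\rho]\in\CC\setminus\mathcal B_1$, the fiber is discrete. It is also compact, because it lies in the sublevel set $\{E_\rho\le\max\}$ restricted to critical points. Concretely, the fiber sits inside the compact set $\Crit(E_\rho)$, which is closed in the compact sublevel set of $E_\rho$ bounded by the maximum over critical points. That maximum is finite because all critical points lie in a compact set: properness together with the bound $E_\rho(J)\ge$ area forces critical points, which are minimal surfaces, to have area bounded by the Gauss--Bonnet-type bound for minimal surfaces in curvature $\le-1$, namely $\Area\le 2\pi(2g-2)$. For an unbranched minimal map the energy equals the area, so critical energies are $\le 4\pi(g-1)$. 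A compact discrete set is finite.

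\textbf{Genericity and the main obstacle.} Set $\mathcal G:=\CC\setminus(\mathcal B_1\cup\pi_{\mathrm{imm}}(\operatorname{Crit}\pi_{\mathrm{imm}}))$. Both removed sets are $F_\sigma$, null and meagre: the first by Theorem~\ref{thm:ramification}, the second by the Sard argument above. So $\mathcal G$ is a dense $G_\delta$ of full measure in the open set $\CC$, using the Baire property. I expect the main obstacle to be the identification of $\Mimm$ as a manifold of dimension exactly $N$ on which $\pi_{\mathrm{imm}}$ is smooth. This requires importing McIntosh's smoothness result and checking that his topology agrees with the subspace topology. If the dimension count were off, I would instead apply Sard to the map $(J,[\rho])\mapsto d_JE_\rho$ as a section of $T^*\Teich$ over $\X$, and use a parametric transversality argument to get nondegenerate, hence isolated, critical points for generic $\rho$.
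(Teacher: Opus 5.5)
Your proposal is correct and follows essentially the same route as the paper. The shared steps are: Sard's theorem for the equidimensional map $\pi_{\mathrm{imm}}\colon\Mimm\to\X$; removal of $\mathcal B_1$ and of the $F_\sigma$ null set of critical values from $\CC$; discreteness of regular fibres; and compactness and nonemptiness of $\Crit(E_\rho)$, obtained from properness of $E_\rho$ together with the area bound $4\pi(g-1)$ that comes from $K_h\le -1$ and Gauss--Bonnet.

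The one soft spot is your sketch of properness. As phrased it only handles pinching curves, not divergence in $\Teich$ along mapping-class directions, which is where the quasi-isometric embedding is really needed. It is cleaner simply to cite \cite[Theorem~A]{GW07}, as the paper does.
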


Section~\ref{sec:background} gives the analytic input and the
extension description, including the branching multiplicities.
Section~\ref{sec:incidence} constructs the incidence spaces and
proves Theorem~\ref{thm:ramification} and
Corollary~\ref{cor:generic-immersion}. Section~\ref{sec:energy}
proves Theorem~\ref{thm:finiteness} using the energy and compactness
arguments.

\section{Nonabelian Hodge theory}\label{sec:background}

\subsection{The character locus}
\label{subsec:character-proof}

As before, $S$ is a closed connected oriented surface of genus $g$, with
$g\, \geq\, 2$, and $\rho\, :\, \Gamma\, :=\, \pi_1(S,\, s_0)\, \longrightarrow\,
\PU$ is a homomorphism with $[\rho]\,\in\,\X$. Denoting the Lie algebra of $\PU$ by $\mathfrak{pu}(2,1)$,
let $\operatorname{Ad}\rho$ be the flat real vector
bundle over $S$ associated with the adjoint representation
$\operatorname{Ad}\circ\rho\,:\,\Gamma\,\longrightarrow\,
\operatorname{GL}(\mathfrak{pu}(2,1))$, namely
$$
\operatorname{Ad}\rho
\,\ :=\,\ \widetilde S\times^{\operatorname{Ad}\circ\rho}
\mathfrak{pu}(2,1).
$$

The tangent space to the good character variety at
$[\rho]$ is naturally identified with
$$
T_{[\rho]}\X\ \cong\ H^1(S;\,\operatorname{Ad}\rho)
$$
(see \cite[Sections~1.2--1.5]{Gol84}). Since $\rho$ is good, the centralizer of $\rho(\Gamma)$ in $\PU$
is trivial. Consequently,
$H^0(S;\, \operatorname{Ad}\rho)\,=\,0$. The nondegenerate invariant Killing form on
$\mathfrak{pu}(2,1)$ implies that  $\operatorname{Ad}\rho\,=\,(\operatorname{Ad}\rho)^*$.
Hence Poincar\'e duality gives that $H^2(S;\operatorname{Ad}\rho)
\,\simeq\,H^0(S;\, \operatorname{Ad}\rho)^*\,=\,0$. Now, by the formula for Euler characteristic,
$$
\begin{aligned}
\dim_{\RR}H^1(S;\operatorname{Ad}\rho)
\ &=\ -\chi(S)\dim_{\RR}\mathfrak{pu}(2,1)\\
&=\ (2g-2)\cdot 8\\
&=\ 16g-16.
\end{aligned}
$$
Since the Toledo invariant is a locally constant function,
it follows that $\X$ is a smooth real manifold of
dimension $16g-16$.

\subsection{Harmonic maps and universal Higgs bundles}

Let $\Hgood$ be the universal moduli space over $\Teich$ of
stable projective $\PU$-Higgs bundles whose associated
representations are good and have Toledo invariant zero.
The following proposition gives the regularity needed below.

\begin{proposition}
\label{prop:analytic-inputs}
The nonabelian Hodge correspondence gives a smooth identification
\begin{equation}
\label{eq:universal-hodge}
\mathcal N\ :\ \Hgood\ \xrightarrow{\,\ \simeq\,\ }
\Teich\times\X.
\end{equation}
The space $\Mimm$ is a second-countable smooth complex manifold
of complex dimension $8(g-1)$.
\end{proposition}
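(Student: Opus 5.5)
The proof has two independent parts: the identification \eqref{eq:universal-hodge} and the structure of $\Mimm$.

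\textbf{The identification $\mathcal N$.} I would build it fibrewise over $\Teich$ and then argue that it is smooth in the parameters. For fixed $J$, the Hitchin--Simpson theorem \cite{Hit87,Sim92} gives a bijection between polystable $\PU$-Higgs bundles on $S_J$ and reductive representations. Corlette and Donaldson \cite{Cor88,Don87} give the converse: reductive representations have equivariant harmonic maps, and conversely. Under this bijection, stability with trivial automorphism group (modulo centre) corresponds to goodness of $\rho$. The Toledo invariant is computed on both sides by the same degree formula, so $\tau=0$ matches. Hence $\mathcal N$ is a bijection $\Hgood\to\Teich\times\X$.

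For smoothness I would use the family version. Over the good locus the harmonic map $f_{J,\rho}$ is unique and is a nondegenerate solution of the harmonic map equation. The reason is that the linearisation, the Jacobi operator, has trivial kernel: by the Bochner formula and the negative curvature of $\CH$, a Jacobi field must be parallel, and a parallel field would give a nontrivial centralizer. The implicit function theorem in Banach spaces of sections, with the complex structure $J$ as a smooth parameter, then shows that $(J,\rho)\mapsto f_{J,\rho}$ is smooth. Hence its Higgs field $\partial f$ and the holomorphic structure vary smoothly, and $\mathcal N^{-1}$ is smooth. Conversely, the solution of Hitchin's equations depends smoothly on the Higgs data, since its linearisation is invertible at stable points. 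Together these give a diffeomorphism.

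\textbf{The structure of $\Mimm$.} Minimality of $f_{J,\rho}$ means the Hopf differential $\operatorname{tr}(\Phi^2)=\langle\partial f,\partial f\rangle$ vanishes. Thus, in Higgs bundle language, $\Mimm$ is the subset of $\Hgood$ where the quadratic differential $q_2(\Phi)\in H^0(K_J^2)$ vanishes, the map is nonconstant, and the Loftin--McIntosh divisors have no common zero. Here I split according to holomorphic versus non-holomorphic maps. Holomorphic and antiholomorphic maps force $|\tau|$ to be maximal in their degree, so at $\tau=0$ they do not occur among nonconstant maps. Every point is therefore of the type covered by McIntosh. I would then invoke \cite[Theorems~1.1, 1.2]{McI23}: the space of such minimal immersions is a smooth complex manifold. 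It is parametrised by the two line bundles, the divisor data and the extension class, with varying $J$.

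Second countability is inherited from $\Teich\times\X$, since $\Mimm$ carries the subspace topology.

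For the dimension, I would use a transversality count. The section $q_2$ of the bundle $H^0(K_J^2)$ over $\Hgood$ has rank $3g-3$ (complex). One shows it is a submersion along $\Mimm$, using the first-variation formula \cite{DW07}: the derivative in the $J$-direction of the Hopf differential at a minimal map is essentially the identity on Beltrami differentials paired with $K^2$. Then the complex dimension is
\[
\dim_{\CCplx}\Hgood-(3g-3)=(3g-3+8g-8)-(3g-3)=8g-8.
\]
The same number can be recovered directly from McIntosh's parametrisation, and I would cross-check it there.

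\textbf{Main obstacle.} I expect the hardest step to be the submersion (transversality) claim, equivalently the nondegeneracy of the holomorphic structure on $\Mimm$ in the $J$-direction. My first attempt would be to cite McIntosh's smoothness proof, adapted to the good locus. Failing that, I would compute the derivative of $q_2$ via the Beltrami variation $\dot J$, which gives $q_2\mapsto 2\,\dot J\cdot\|\partial f\|^2$-type terms, and check surjectivity by Serre duality against $H^1(T_J)$.
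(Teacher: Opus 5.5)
Apart from the dimension count, your outline follows the paper. The paper also rules out holomorphic and antiholomorphic maps via the Toledo invariant, takes smoothness of the minimal-immersion space from \cite[Theorem~1.1]{McI23}, and gets second countability from $\Teich\times\X$. For $\mathcal N$ the paper just cites \cite[Section~2]{McI23}. Your implicit-function sketch is an acceptable substitute: for $\CH$ the Bochner argument gives $V\wedge df=0$ by strictly negative curvature, so a Jacobi field vanishes unless the image lies in a geodesic, which a good $\rho$ cannot preserve. Two small corrections. First, holomorphic maps do not in general have maximal $|\tau|$. The right reason is that $f^*\omega$ is the induced area form, so $\tau=\frac{2}{\pi}\Area(f)>0$, and $\tau<0$ in the antiholomorphic case. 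Second, second countability requires McIntosh's manifold topology to agree with the subspace topology, which the paper takes from \cite[Section~2 and Lemma~5.1]{McI23}.

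The step that fails is the transversality claim. Read it at fixed $\rho$, which is the only setting where the first-variation formula applies. There $q_{J,\rho}$ represents $dE_\rho(J)$, so its $J$-derivative at a zero is the Hessian of $E_\rho$ at that critical point. The ``identity-like'' piece, roughly $\mu\mapsto h\bar\mu$ with $h=f^*g_{\CH}$, comes only from splitting the fixed tensor $h$ into types for the deformed complex structure. The variation $\dot f$ of the harmonic map adds $2\langle\partial\dot f,\partial f\rangle$, which is of the same order and not controlled. If the $J$-derivative were always invertible, the implicit function theorem would make $\pi_{\mathrm{imm}}$ a local diffeomorphism everywhere, with no critical values at all. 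That is much stronger than Theorem~\ref{thm:finiteness}, whose proof invokes Sard precisely because degenerate critical points of $E_\rho$ are not excluded. Surjectivity of the full differential, using the $\rho$-directions too, would be a separate claim that your argument does not address, and you do not need it. Take the dimension directly from \cite[Theorem~1.1]{McI23}, namely $(g-1)\dim_{\RR}\PU=8(g-1)$, as the paper does. Equivalently, use the parameter count $(3g-3)+2d+(5g-5-2d)$ of the description by $(J,D_1,D_2,\xi)$ with $\gcd(D_1,D_2)=0$. What you list as a cross-check should be the argument itself.
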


\begin{proof}
On the good locus, the universal nonabelian Hodge correspondence
is smooth, with smooth inverse; see \cite[Section~2]{McI23}. 
Since the Toledo invariant is locally constant, restricting this
correspondence to the Toledo-zero locus gives
\eqref{eq:universal-hodge}.

By \eqref{eq:toledo}, a nonconstant holomorphic equivariant map
has positive Toledo invariant, while a nonconstant
antiholomorphic equivariant map has negative Toledo invariant.
Thus neither type occurs in $\Mimm$. We may therefore apply
\cite[Theorem~1.1]{McI23}, which gives a smooth complex manifold
of dimension
$$
(g-1)\dim_{\RR}\PU\ =\ 8(g-1).
$$
The restriction to good holonomy is open. The topology of this
manifold agrees with the subspace topology in
$\Teich\times\X$; see \cite[Section~2 and Lemma~5.1]{McI23}.
Since $\Teich\times\X$ is second countable, so is $\Mimm$.
\end{proof}

\begin{remark}\label{eq:critical-conformal}
We also recall the variational interpretation that will be used in
Section~\ref{sec:energy}. Let $q_{J,\rho}$ be the Hopf differential of
$f_{J,\rho}$. The following three statements are equivalent:
\begin{enumerate}
\item $J\ \in\ \Crit(E_\rho)$,

\item $q_{J,\rho}\ =\ 0$,

  \item $f_{J,\rho}$ is weakly conformal.
\end{enumerate}
See \cite{DW07}. Since the map $f_{J,\rho}$ is
nonconstant, the last condition precisely says that $f_{J,\rho}$ is an
equivariant minimal map in our convention.
\end{remark}

\subsection{Extension data and the branch divisor}
\label{sec:extensions}

As in \cite[Section~2]{LM19}, we can normalize a projective
$\PU$-Higgs bundle by tensoring it with the inverse of its rank-one
summand. In this normalization, it takes the form
\begin{equation}
\label{eq:higgs-form}
  E=V\oplus\OO,
  \qquad
  \Phi=
  \begin{pmatrix}
    0&\beta\\
    \gamma&0
  \end{pmatrix},
  \qquad
  \beta:\OO\longrightarrow V\otimes K_J,
  \quad
  \gamma:V\longrightarrow K_J,
\end{equation}
where $V$ is a holomorphic vector bundle of rank two. In this set-up,
\begin{equation}
\label{eq:higgs-toledo}
  \tau(\rho)\ =\ -\frac23\deg V.
\end{equation}
The Hopf differential vanishes exactly when
\begin{equation}
\label{eq:conformality}
0\ =\ (\gamma\otimes\operatorname{id}_{K_J})\circ\beta \ \in\ H^0(S_J,\, K_J^2);
\end{equation}
equivalently, $\operatorname{tr}(\Phi^2)\,=\,0$
\cite[Section~2.3]{LM19}. For notational convenience, write $\gamma\beta$ for this composition
$(\gamma\otimes\operatorname{id}_{K_J})\circ\beta$.

The next proposition gives its Toledo-zero form.
We include the degree and multiplicity arguments needed for the
ramification estimate.

\begin{proposition}[{\cite[Theorem~2.3]{LM19}}]\label{prop:extension}
Every marked equivariant minimal map $(J,\, f,\, \rho)$ with holonomy in $\X$ determines
effective divisors $D_1,\,D_2$ of equal degree $d$, where
\begin{equation}\label{eq:d-range}
  0\ \leq\ d\ \leq\ 2g-3,
\end{equation}
and an extension
\begin{equation}
\label{eq:extension}
  0\  \longrightarrow\  K_J^{-1}(D_1)\ \longrightarrow\ V\ \longrightarrow
\ K_J(-D_2)\ \longrightarrow\ 0.
\end{equation}
The extension class is in
\begin{equation}
\label{eq:extension-class}
  \Ext^1\bigl(K_J(-D_2),\,K_J^{-1}(D_1)\bigr)\ \simeq\
  H^1\bigl(S_J,\,K_J^{-2}(D_1+D_2)\bigr).
\end{equation}
Conversely, take any pair of effective divisors of common degree $d$
satisfying \eqref{eq:d-range}, and take any class in \eqref{eq:extension-class}. These
together produce a stable projective $\PU$--Higgs bundle with $\gamma\beta\,=\,0$ and
$\tau\,=\,0$. On the locus with good holonomy, this gives a marked equivariant
minimal map.

In both directions, 
\begin{equation}
\label{eq:branch-divisor}
  B(f)\ =\ \gcd(D_1,\,D_2), \qquad
  \ram(f)\ =\ \deg\gcd(D_1,\,D_2),
\end{equation}
where at each point the multiplicity in the greatest common divisor is
the minimum of the two multiplicities.
\end{proposition}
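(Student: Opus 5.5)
We follow the Loftin--McIntosh argument and specialize it to Toledo invariant zero. Throughout, the bookkeeping of degrees and vanishing orders is made explicit.

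\emph{Forward direction.} Start from a stable normalized Higgs bundle \eqref{eq:higgs-form} with $\gamma\beta=0$ and $\tau=0$, so $\deg V=0$ by \eqref{eq:higgs-toledo}.
\begin{itemize}
\item Neither $\beta$ nor $\gamma$ vanishes identically. If, say, $\gamma=0$, then $V\otimes K_J$ would be $\Phi$-invariant in a way that makes the map holomorphic or antiholomorphic. Equivalently, some subbundle would be $\Phi$-invariant and contradict stability or the sign of $\tau$; Proposition~\ref{prop:analytic-inputs} has already excluded both cases.
\item Let $D_2$ be the zero divisor of $\gamma$, so that $\gamma$ factors as a surjection $V\to K_J(-D_2)$.
\item Let $L:=\ker\gamma$. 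Then $\deg L=\deg V-\deg K_J(-D_2)=-(2g-2)+\deg D_2$.
\item Since $\gamma\beta=0$, the section $\beta\in H^0(V\otimes K_J)$ takes values in $L\otimes K_J$. Let $D_1$ be its zero divisor, so that $\OO(D_1)\cong L\otimes K_J$ and $L=K_J^{-1}(D_1)$.
\item Comparing degrees gives $\deg D_1=\deg D_2=:d$. This yields \eqref{eq:extension}, and \eqref{eq:extension-class} is Serre's standard identification $\Ext^1(A,B)\simeq H^1(A^{-1}B)$.
\end{itemize}

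\emph{The degree bound \eqref{eq:d-range}.} I would obtain it from stability.
\begin{itemize}
\item The $\Phi$-invariant subbundle $L\oplus\OO$ must have negative degree, since $E$ has degree $0$. Its degree is $\deg L=d-(2g-2)$, so $d<2g-2$, i.e. $d\le 2g-3$.
\item Symmetrically, the subbundle $L$ is sent by $\Phi$ into $L\otimes K_J$... On reflection, $\gamma|_L=0$, so $\Phi(L)=0$ and $L$ is $\Phi$-invariant. This gives the same bound.
\item The check that $V\oplus 0$, or the $\Phi$-invariant pieces, impose no further constraint is routine.
\end{itemize}

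\emph{Converse.} Given $D_1,D_2$ and an extension class, define $V$ by \eqref{eq:extension}.
\begin{itemize}
\item Take $\gamma$ to be the composite $V\to K_J(-D_2)\hookrightarrow K_J$ given by the canonical section $s_{D_2}$.
\item Take $\beta$ to be $s_{D_1}$ viewed in $L\otimes K_J\subset V\otimes K_J$. Then $\gamma\beta=0$ automatically, and $\deg V=0$ gives $\tau=0$.
\item The main work is stability. The $\Phi$-invariant subsheaves are essentially $L$, $L\oplus\OO$, and subsheaves of $V$ killed by $\gamma$, i.e. contained in $L$. One also needs line subbundles $M\subset V$ with $\Phi(M)\subset \OO\otimes K_J$ together with $\OO$; I would enumerate these by cases and use $d\le 2g-3$ to get negative degrees.
\item The correspondence through \eqref{eq:universal-hodge} and Remark~\ref{eq:critical-conformal} then produces the minimal map on the good locus.
\end{itemize}

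\emph{Branch divisor, the main obstacle.} Locally, the differential of $f$ corresponds to $\Phi$ viewed as an $\operatorname{End}E$-valued $(1,0)$-form. Since $f$ is weakly conformal, $\partial f$ is identified with $\Phi$, and its vanishing order at $p$ is the order of vanishing of the matrix $\Phi$. In a local frame adapted to \eqref{eq:extension}, the entries of $\Phi$ are
\begin{itemize}
\item $\beta=(s_{D_1},0)^T$, vanishing to order $\operatorname{ord}_pD_1$;
\item $\gamma=(0,s_{D_2})$, vanishing to order $\operatorname{ord}_pD_2$, since $\gamma$ kills $L$.
\end{itemize}
Hence $\operatorname{ord}_p(df)=\min(\operatorname{ord}_pD_1,\operatorname{ord}_pD_2)$, which is \eqref{eq:branch-divisor}.

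The delicate point is justifying that the branching order of $f$ equals the vanishing order of $\Phi$ as a section of $\operatorname{End}E\otimes K_J$. This would go through the identification $df^{1,0}\leftrightarrow\Phi$ via the harmonic metric, which is smooth and nondegenerate, so vanishing orders are preserved.
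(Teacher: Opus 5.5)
Your argument is correct and is essentially the Loftin--McIntosh argument that the paper imports by citing \cite[Theorem~2.3]{LM19}; the paper gives no separate proof. With $\deg V=0$, you read off $D_2$ and $D_1$ as the zero divisors of $\gamma$ and of $\beta\in H^0(\ker\gamma\otimes K_J)$, obtain $d\leq 2g-3$ from the $\Phi$-invariant pieces $L\oplus 0$ and $L\oplus\OO$, and obtain $B(f)=\gcd(D_1,D_2)$ from the vanishing order of $\Phi\leftrightarrow\partial f$.

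One small fix in the converse: $M\oplus\OO$ is $\Phi$-invariant exactly when $\beta(\OO)\subset M\otimes K_J$, which forces $M=L$; the condition $\Phi(M)\subset\OO\otimes K_J$ that you wrote is automatic. So your case enumeration collapses to $L\oplus 0$ and $L\oplus\OO$, both of degree $d-(2g-2)<0$.
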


\section{Incidence spaces and their holonomy maps}
\label{sec:incidence}
\subsection{The relative extension bundle}
\label{subsec:relative-extension-bundle}
Let $\pi\,:\,\mathscr S\,\longrightarrow\,\Teich$ be the universal
marked curve. For $j\,\geq\,1$, denote its relative symmetric
product over $\Teich$ by $\Sym^j(\mathscr S/\Teich)$. Its fiber over any $J$ is
$\Sym^j(S_J)$, a smooth complex manifold of dimension $j$
\cite{Mac62}.  The relative symmetric product is a smooth
holomorphic family of total complex dimension $3g-3+j$. Set
$\Sym^0(\mathscr S/\Teich)\,:=\,\Teich$.
For $d$ in \eqref{eq:d-range}, set
\begin{equation}
\label{eq:divisor-base}
  \mathscr P_d\,=\,
  \Sym^d(\mathscr S/\Teich)
  \times_{\Teich}
  \Sym^d(\mathscr S/\Teich).
\end{equation}
Its points are $(J,\,D_1,\,D_2)$, where $D_1$ and $D_2$ are
effective divisors of degree $d$ on the Riemann surface $S_J$.

\begin{proposition}[{\cite[Section~6.1 and Theorem~6.1]{LM19}}]
\label{prop:extension-bundle}
The extension spaces in \eqref{eq:extension-class}
assemble into a holomorphic vector bundle
\[
\mathscr E_d\ \longrightarrow\ \mathscr P_d
\]
of rank \(r_d=5g-5-2d\).
\end{proposition}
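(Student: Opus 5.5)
The plan is to realize $\mathscr E_d$ as a higher direct image sheaf on $\mathscr P_d$. We then show that the relevant cohomology has constant dimension $5g-5-2d$ on every fiber, and invoke Grauert's theorem to conclude that the direct image is a holomorphic vector bundle whose fibers are the spaces in \eqref{eq:extension-class}.

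\emph{Fiberwise computation.} Fix $(J,D_1,D_2)\in\mathscr P_d$ and write $D=D_1+D_2$, a divisor of degree $2d$. By Serre duality,
\[
H^1\bigl(S_J,K_J^{-2}(D)\bigr)\ \simeq\ H^0\bigl(S_J,K_J^{3}(-D)\bigr)^*.
\]
The line bundle $K_J^3(-D)$ has degree $6g-6-2d$. The hypothesis \eqref{eq:d-range}, $d\leq 2g-3$, gives $6g-6-2d\geq 2g>2g-2$. Hence $H^1(S_J,K_J^3(-D))=0$, and Riemann--Roch gives
\[
h^0\bigl(K_J^3(-D)\bigr)\ =\ 6g-6-2d-g+1\ =\ 5g-5-2d.
\]
For the degree-zero direct image we note that $\deg K_J^{-2}(D)=2d-4g+4\leq -2<0$. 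Therefore $H^0(S_J,K_J^{-2}(D))=0$ as well. Both cohomology groups thus have dimension independent of the point of $\mathscr P_d$.

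\emph{Globalization.} Let $q:\mathscr C_d:=\mathscr P_d\times_{\Teich}\mathscr S\to\mathscr P_d$ be the pulled-back universal curve. This is a proper holomorphic submersion with fibers $S_J$. On $\mathscr C_d$ we have the relative canonical bundle $K_{\mathscr C_d/\mathscr P_d}$, pulled back from $\mathscr S$. We also have the two universal divisors $\mathscr D_1,\mathscr D_2$: $\mathscr D_i$ is the incidence locus $\{(J,D_1,D_2,x): x\in D_i\}$, pulled back from the universal divisor in $\Sym^d(\mathscr S/\Teich)\times_{\Teich}\mathscr S$. Each $\mathscr D_i$ is a relative effective Cartier divisor, flat over the base, because over a smooth curve family the universal divisor of the relative symmetric product is locally cut out by a single function (the monic polynomial whose roots are the points of the divisor in a local coordinate). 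Set
\[
\mathscr L:=K_{\mathscr C_d/\mathscr P_d}^{-2}(\mathscr D_1+\mathscr D_2),
\]
a holomorphic line bundle on $\mathscr C_d$ restricting to $K_J^{-2}(D_1+D_2)$ on each fiber. Define $\mathscr E_d:=R^1q_*\mathscr L$.

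\emph{Applying Grauert.} Since $\mathscr P_d$ is a smooth, hence reduced, complex manifold of dimension $3g-3+2d$, and $h^1$ of the fibers of $\mathscr L$ is constant, Grauert's direct image theorem shows that $R^1q_*\mathscr L$ is locally free of rank $5g-5-2d$. The base change map $R^1q_*\mathscr L\otimes\mathbb C(p)\to H^1(S_J,\mathscr L|_{S_J})$ is an isomorphism. This identifies the fiber of $\mathscr E_d$ at $(J,D_1,D_2)$ with \eqref{eq:extension-class}. The vanishing of $R^2$ on curves and the constancy of $h^0$ make the base change argument clean.

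The main obstacle is not the rank count, which is immediate from \eqref{eq:d-range}, but the analytic foundations: that $\mathscr S\to\Teich$ is a holomorphic family, that $\Sym^d(\mathscr S/\Teich)$ carries the universal Cartier divisor, and that Grauert's theorem applies over the non-compact base $\Teich$. For these I would lean on the construction in \cite[Section~6.1]{LM19} and the standard facts from \cite{Mac62}. Local freeness is local on the base, so it suffices to work over small polydiscs in $\mathscr P_d$.
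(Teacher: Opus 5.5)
Your proposal is correct and follows essentially the same route as the paper. In both, you pull back the universal curve to $\mathscr P_d$, form $K^{-2}(\mathcal D_1+\mathcal D_2)$ from the relative canonical bundle and the tautological divisors, check fiberwise that $h^0=0$ and $h^1=5g-5-2d$ using $d\leq 2g-3$, and invoke Grauert's cohomology-and-base-change theorem to conclude that $R^1$ of the direct image is locally free with the extension spaces as fibers. The differences are cosmetic: you compute $h^1$ via Serre duality with $K_J^3(-D)$ rather than directly by Riemann--Roch, and you add a justification that the universal divisors are relative Cartier divisors, which the paper takes for granted.
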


\begin{proof}
Let
$$
q_d\ :\ \mathscr P_d\ \longrightarrow\ \Teich,
\qquad (J,\,D_1,\, D_2)\ \longmapsto\ J
$$
be the natural projection. Consider the universal family of Riemann surfaces $\pi\,:\, \mathscr S
\, \longrightarrow\, \Teich$. Let
\begin{equation}\label{e5}
\mathscr S_d \ :=\  q^*_d \mathscr S\ \stackrel{p_d}{\longrightarrow}\ \mathscr P_d
\end{equation}
be the pullback of this family of Riemann surfaces. Let ${\mathcal D}_1$ (respectively,
${\mathcal D}_2$) be the tautological relative divisor on $\mathscr S_d$ --- for
the projection $p_d$ --- corresponding to the first
(respectively, second) factor of the product $\Sym^d(\mathscr S/\Teich)
\times_{\Teich} \Sym^d(\mathscr S/\Teich)$. Therefore, for any point $z\, :=\, (J,\, D_1,\, D_2)
\, \in\, \mathscr P_d$,
\begin{equation}\label{e7}
\left.\mathcal D_1\right|_{(p_d)^{-1}(z)}\ =\ D_1,\ \ \
\left.\mathcal D_2\right|_{(p_d)^{-1}(z)}\ =\ D_2;
\end{equation}
note that $(p_d)^{-1}(z)\,=\, S_J$. Let
\begin{equation}\label{e8}
K_{p_d}\ :=\ \Omega^1_{\mathscr S_d/\mathscr P_d}\ \longrightarrow\ \mathscr S_d
\end{equation}
be the relative canonical line bundle of this family in \eqref{e5}.

Define the holomorphic line bundle
\begin{equation}
\label{eq:extension-line-bundle}
\mathcal A_d
\ :=\ K_{p_d}^{-2}\otimes
\OO_{\mathscr S_d}(\mathcal D_1+\mathcal D_2)\ \longrightarrow\ \mathscr S_d.
\end{equation}
For any point $z\, :=\, (J,\, D_1,\, D_2)\, \in\, \mathscr P_d$, the restriction
of $\mathcal A_d$ to the fiber $(p_d)^{-1}(z)\,=\, S_J$ is
$K_J^{-2}\otimes {\mathcal O}_{S_J}(D_1+D_2)$. From \eqref{eq:d-range} it follows
immediately that
\begin{equation}\label{e4}
\deg\bigl(K_J^{-2}\otimes {\mathcal O}_{S_J}(D_1+D_2)\bigr)
\ =\ -4g+4+2d\ \leq\ -2.
\end{equation}

Thus the restriction has no nonzero holomorphic section.
Riemann--Roch gives
$$
h^1\bigl(S_J,\,K_J^{-2}(D_1+D_2)\bigr)\ =\ 5g-5-2d,
$$
independently of $z$. Cohomology and base change
\cite{GR84} therefore imply that $(p_d)_*\mathcal A_d\,=\,0$ and
\begin{equation}\label{eq:extension-direct-image}
\mathscr E_d\ :=\ R^1(p_d)_*\mathcal A_d\ \longrightarrow\ \mathscr P_d
\end{equation}
is a holomorphic vector bundle of rank $5g-5-2d$, whose
fiber at $z$ is the extension space in
\eqref{eq:extension-class}.
\end{proof}

\subsection{A common divisor of prescribed degree}

To impose the condition that the total ramification is at least $b$, we choose a common effective
subdivisor $C$ of degree $b$ and write $D_i=C+A_i$ for $i\,=\, 1,\, 2$. The 
divisors $A_i$ have degree $d-b$. Accordingly, for
$1\,\leq\,b\,\leq\,d$, define
\begin{equation}
\label{eq:incidence-base}
  \mathscr I_{d,b}\ =\
  \Sym^b(\mathscr S/\Teich)
  \times_{\Teich}
  \Sym^{d-b}(\mathscr S/\Teich)
  \times_{\Teich}
  \Sym^{d-b}(\mathscr S/\Teich)
\end{equation}
and the holomorphic addition map
\begin{equation}\label{e6}
  a_{d,b}\ :\ \mathscr I_{d,b}\ \longrightarrow\ \mathscr P_d,
  \ \ \  (J,\,C,\,A_1,\,A_2)\ \longmapsto\ (J,\,C+A_1,\,C+A_2).
\end{equation}
Consider the vector bundle $\mathscr E_d$ in \eqref{eq:extension-direct-image}.
Let
\begin{equation}
\label{eq:incidence-total}
  \mathscr F_{d,b}=\Tot(a_{d,b}^*\mathscr E_d)
\end{equation}
be the total space of the pullback of it by the map $a_{d,b}$
in \eqref{e6}. An element is a $5$-tuple $(J,C,A_1,A_2,\xi)$, where $\deg C=b$,
$\deg A_i=d-b$, and $\xi$ is the extension class for $D_i=C+A_i$.

\begin{proposition}
\label{prop:incidence}
The space $\mathscr F_{d,b}$ is a smooth second-countable complex
manifold, and
\begin{equation}
\label{eq:incidence-dim}
  \dim_{\CCplx}\mathscr F_{d,b}=8g-8-b.
\end{equation}
\end{proposition}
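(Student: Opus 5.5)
The plan is to build $\mathscr F_{d,b}$ in layers, checking smoothness, second countability and dimension at each stage, and then add the fibre rank of the extension bundle.

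\textbf{The base $\mathscr I_{d,b}$.} By \eqref{eq:incidence-base}, $\mathscr I_{d,b}$ is a triple fibre product over $\Teich$ of relative symmetric products. As recalled in Section~\ref{subsec:relative-extension-bundle}, each $\Sym^j(\mathscr S/\Teich)\to\Teich$ is a holomorphic submersion with fibre $\Sym^j(S_J)$, which is smooth of dimension $j$ by \cite{Mac62}. For $j=0$ the factor is $\Teich$ itself.

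\textbf{Local triviality of the family.} Since $\Teich$ is a contractible domain in $\CCplx^{3g-3}$, I would use Ehresmann's theorem in its holomorphic-submersion form. It gives local holomorphic charts on $\Sym^j(\mathscr S/\Teich)$ of the form $U\times W$. Here $U\subset\Teich$ is open and $W$ is a chart of the fibre, obtained from local coordinates in the universal curve. In the symmetric-product setting these are the elementary symmetric functions of the local coordinates, applied to $\mathscr S\to\Teich$, which is a proper holomorphic submersion.

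\textbf{Fibre products.} A fibre product over $\Teich$ of holomorphic submersions is again a complex manifold, submersing onto $\Teich$. Its dimension is $(3g-3)$ plus the sum of the fibre dimensions. Therefore
\[
\dim_{\CCplx}\mathscr I_{d,b}=3g-3+b+2(d-b)=3g-3+2d-b .
\]

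\textbf{Second countability.} Each $\Sym^j(S_J)$ is compact and $\Teich$ is second countable. Hence $\mathscr I_{d,b}$ is covered by countably many product charts, so it is second countable.

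\textbf{Pullback bundle and total space.} Next, $a_{d,b}$ in \eqref{e6} is holomorphic. Addition of divisors is holomorphic fibrewise, being polynomial in the elementary symmetric coordinates, and it commutes with the projections to $\Teich$. So by Proposition~\ref{prop:extension-bundle}, $a_{d,b}^*\mathscr E_d$ is a holomorphic vector bundle on $\mathscr I_{d,b}$ of rank $r_d=5g-5-2d$. The total space of a holomorphic vector bundle over a second-countable complex manifold is again such a manifold, via local trivializations over a countable cover. Its dimension is
\[
(3g-3+2d-b)+(5g-5-2d)=8g-8-b,
\]
which is \eqref{eq:incidence-dim}. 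Note that $d$ cancels, so every stratum has the same dimension.

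\textbf{Main obstacle.} The only step needing care is the smoothness of the relative symmetric product as a total space, beyond the smoothness of its individual fibres, together with the holomorphicity of $a_{d,b}$. My first approach is to take the local product structure $\mathscr S|_U\cong U\times\Delta$ near a point. Then $\Sym^j$ of the family is locally $U\times\Sym^j(\Delta)$, with $\Sym^j(\Delta)\subset\CCplx^j$ open via symmetric functions. Near points where the divisor has support at several distinct points, I would take products of such local models over disjoint discs. Addition of divisors becomes multiplication of monic polynomials in these coordinates, which is holomorphic.

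Everything else is the rank count of Proposition~\ref{prop:extension-bundle}, which already used $d\le 2g-3$ to ensure vanishing of $H^0$ and hence constant rank.
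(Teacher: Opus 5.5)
Your proposal is correct and follows the paper's proof. Both arguments use the smoothness of the fibre product of relative symmetric products over $\Teich$, the count $(3g-3)+b+2(d-b)$ for $\dim_{\CCplx}\mathscr I_{d,b}$, and the rank $5g-5-2d$ of the pulled-back extension bundle; your local symmetric-function models, and addition as multiplication of monic polynomials, supply detail the paper only asserts.

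One small correction: Ehresmann's theorem gives only $C^\infty$ trivializations. The local holomorphic product charts $U\times W$ you need come from the holomorphic implicit function theorem for the submersion $\mathscr S\to\Teich$, and that is in fact what your final paragraph uses.
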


\begin{proof}
The relative symmetric products are smooth holomorphic families over
$\Teich$. Their fiber products over $\Teich$ are therefore smooth.
The bundle in \eqref{eq:incidence-total} is
holomorphic. Adding the dimensions gives
\begin{align*}
  \dim_{\CCplx}\mathscr F_{d,b}
  &=
  (3g-3)+b+(d-b)+(d-b)+(5g-5-2d)\\
  &=
  8g-8-b.
\end{align*}
Second countability follows from the fact that both base and fiber
are second countable.
\end{proof}

The parameter $C$ gives a common \emph{subdivisor} of degree $b$.
It does not have to be the whole greatest common divisor. We do not
require $A_1,A_2$ to be disjoint. Thus this construction detects total
ramification \emph{at least} $b$. Here we count multiplicities, not just
the distinct branch points. For instance, if $D_1$ and $D_2$ have
multiplicities three and two at a point $p$, then $p$ contributes
two to the total ramification. The incidence construction can give
either $p$ or $2p$ as part of the chosen common subdivisor.

\subsection{Holonomy maps and the ramification locus}
\label{subsec:tautological-family}

Recall the family $p_d\,:\,\mathscr S_d\,\longrightarrow\mathscr P_d$ in
\eqref{e5}, its universal divisors
(see \eqref{e7}), and the line bundle $\mathcal A_d$
in \eqref{eq:extension-line-bundle}.
Let
\begin{equation}\label{e10}
T_d\ :=\ \Tot(\mathscr E_d)\ \stackrel{q}{\longrightarrow}\ \mathscr P_d
\end{equation}
be the projection from the total space of the vector
bundle $\mathscr E_d$ in \eqref{eq:extension-direct-image}. The base change of $p_d$ along $q$ is
the fiber product
\begin{equation}\label{e9}
p\ :\ \mathscr S_{T_d}\ :=\ \mathscr S_d\times_{\mathscr P_d}T_d\ \longrightarrow\ T_d.
\end{equation}
Let $\widetilde q\,:\,\mathscr S_{T_d}\,\longrightarrow\, \mathscr S_d$ be the
natural projection to the first factor of the fiber product. On $\mathscr S_{T_d}$,
consider the two holomorphic line bundles
$$
\mathcal L\ :=\ \widetilde q^*\bigl(K_{p_d}^{-1}\otimes\OO_{\mathscr S_d}(\mathcal D_1)\bigr),\ \ \
\mathcal Q\ :=\ \widetilde q^*\bigl(K_{p_d}\otimes{\mathcal O}_{\mathscr S_d}  (-\mathcal D_2)\bigr)
$$
(see \eqref{e8} and \eqref{e7}). The pulled back holomorphic line bundle $\mathcal A\,:=\,
\widetilde q^*\mathcal A_d$ is evidently identified with $\mathcal L\otimes\mathcal Q^{-1}$. We have 
$$
  p_*\mathcal A\ =\ 0, \qquad R^1p_*\mathcal A\ \simeq\ q^*\mathscr E_d,
$$
where $p$ and $q$ are constructed in \eqref{e9} and \eqref{e10} respectively.

The pulled back holomorphic vector bundle $q^*\mathscr E_d$ has a tautological holomorphic section:
at a point $t\,=\,(J,\,D_1,\,D_2,\,\xi)\, \in\, T_d$,
this section takes the value $\xi$. Since $p_*\mathcal A\,=\,0$,
the degree-one edge map in the Leray spectral sequence is an
isomorphism. Therefore
$$
\begin{aligned}
\Ext^1_{\mathscr S_{T_d}}(\mathcal Q,\,\mathcal L)
&\ \simeq\ H^1(\mathscr S_{T_d},\,\mathcal A)\\
&\ \simeq\ H^0(T_d,\,R^1p_*\mathcal A)
\ \simeq\ H^0(T_d,\,q^*\mathscr E_d).
\end{aligned}
$$
Thus the tautological section determines a global extension
class, and hence a short exact sequence of holomorphic vector
bundles on $\mathscr S_{T_d}$:
\begin{equation}\label{eq:tautological-extension}
0\ \longrightarrow\ \mathcal L\ \longrightarrow\ \mathcal V\ \longrightarrow\
\mathcal Q\ \longrightarrow\ 0.
\end{equation}
For any point $t\,=\,(J,\,D_1,\,D_2,\,\xi)\, \in\, T_d$, the restriction of the short exact
sequence in \eqref{eq:tautological-extension} to $p^{-1}(t)\,=\, S_J$ is the short exact
sequence of vector bundles on $S_J$ given by the class $\xi$.

Let $K_p\,:=\,\Omega^1_{\mathscr S_{T_d}/T_d}$ be the relative
canonical line bundle. It is naturally identified with
$\widetilde q^*K_{p_d}$. The pulled back tautological divisors
give the canonical maps
$$
\begin{aligned}
\boldsymbol\beta\ :\ &
\OO_{\mathscr S_{T_d}}\ \longrightarrow\
\mathcal L\otimes K_p\ \longrightarrow\ \mathcal V\otimes K_p,\\
\boldsymbol\gamma\ :\ &
\mathcal V\ \longrightarrow\ \mathcal Q\ \longrightarrow\ K_p.
\end{aligned}
$$
These maps define a holomorphic family of Higgs bundles on
$\mathcal V\oplus\OO_{\mathscr S_{T_d}}$, with
$\boldsymbol\gamma\boldsymbol\beta\,=\,0$. By
Proposition~\ref{prop:extension}, each fiber is stable and has
Toledo invariant zero. We therefore obtain the holomorphic
classifying map
$$
  \Psi_d\ :\ T_d\ \longrightarrow\ \mathscr H_0^{\mathrm{st}},
$$
where $\mathscr H_0^{\mathrm{st}}$ is the universal moduli space over
$\Teich$ of stable projective $\PU$--Higgs bundles with Toledo
invariant zero. The construction of this map for fixed $J$ is also
given in \cite[Lemma~5.1]{McI23}.
\medskip 

The addition map induces a holomorphic map
$$
  \widetilde a_{d,b}\ :\ \mathscr F_{d,b}\ \longrightarrow\ T_d,
  \ \ \ (J,\,C,\,A_1,\,A_2,\,\xi)\ \longmapsto\ (J,\,C+A_1,\,C+A_2,\,\xi).
$$
Denote
$$
  \mathscr F_{d,b}^{\mathrm{good}}\ :=\
  (\Psi_d\,\circ\,\widetilde a_{d,b})^{-1}(\Hgood).
$$
Since the good locus is open, this is an open subset of
$\mathscr F_{d,b}$. Define
\begin{equation}
\label{eq:holonomy-map}
  H_{d,b}\ =\ \operatorname{pr}_{\X}
  \circ\mathcal N
  \circ\Psi_d
  \circ\widetilde a_{d,b}
  \big|_{\mathscr F_{d,b}^{\mathrm{good}}}\ :\ \mathscr F_{d,b}^{\mathrm{good}}
  \ \longrightarrow\ \X ,
\end{equation}
where $\mathcal N$ is the map in \eqref{eq:universal-hodge} and
$\operatorname{pr}_{\X}$ is the projection to the second factor in \eqref{eq:universal-hodge}.

\begin{proposition}
\label{prop:holonomy}
The maps $H_{d,b}$ are smooth. For $1\,\leq\,b\,\leq\,2g-3$,
their images satisfy
\begin{equation}
\label{eq:incidence-cover}
  \mathcal B_b\ =\ \bigcup_{d=b}^{2g-3}
  H_{d,b}(\mathscr F_{d,b}^{\mathrm{good}}),
\end{equation}
where $\mathcal B_b$ is defined in \eqref{eq:exceptional}.
\end{proposition}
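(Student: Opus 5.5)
Smoothness comes first. I will write $H_{d,b}$ as the composition in \eqref{eq:holonomy-map}. The lift $\widetilde a_{d,b}$ is holomorphic, since it is induced on total spaces by the holomorphic addition map $a_{d,b}$ of relative symmetric products, and pulling back $\mathscr E_d$ along $a_{d,b}$ gives a bundle map covering it. The classifying map $\Psi_d$ is holomorphic by construction from the tautological extension \eqref{eq:tautological-extension}. Its restriction to the open set $\mathscr F_{d,b}^{\mathrm{good}}$ lands in $\Hgood$. On $\Hgood$ the map $\mathcal N$ is smooth by Proposition~\ref{prop:analytic-inputs}, and $\operatorname{pr}_{\X}$ is smooth. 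Hence $H_{d,b}$ is a composition of smooth maps. The one point to check is that the image of $\Psi_d\circ\widetilde a_{d,b}$ in the universal moduli space is really a smooth map into the manifold $\Hgood$, not merely continuous. I would take this from McIntosh's construction, which \cite[Lemma~5.1]{McI23} carries out fiberwise over $\Teich$, together with the holomorphicity of the family over $T_d$.

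For the inclusion $\supseteq$ in \eqref{eq:incidence-cover}, take $(J,C,A_1,A_2,\xi)\in\mathscr F_{d,b}^{\mathrm{good}}$ and set $D_i=C+A_i$. By the converse half of Proposition~\ref{prop:extension}, the Higgs bundle $\Psi_d(J,D_1,D_2,\xi)$ is stable with $\gamma\beta=0$ and $\tau=0$. Because its holonomy is good, it gives a marked equivariant minimal map $f$ with holonomy $[\rho]=H_{d,b}(\cdot)$. By \eqref{eq:branch-divisor}, $B(f)=\gcd(D_1,D_2)\geq C$ as divisors, so $\ram(f)\geq\deg C=b$ and $[\rho]\in\mathcal B_b$. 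Note that the holonomy is determined through $\mathcal N$ and the harmonic map is unique, so the minimal map obtained is the one attached to $(J,[\rho])$.

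For the inclusion $\subseteq$, take $[\rho]\in\mathcal B_b$ and a marked minimal map $(J,f,\rho)$ with $\ram(f)\geq b$. The forward half of Proposition~\ref{prop:extension} provides $D_1,D_2$ of degree $d\leq 2g-3$ and a class $\xi$ with $\deg\gcd(D_1,D_2)=\ram(f)\geq b$; in particular $d\geq b$. I choose an effective subdivisor $C\leq\gcd(D_1,D_2)$ of degree exactly $b$, which is possible since the gcd is effective of degree at least $b$. Then $A_i:=D_i-C$ is effective of degree $d-b$. The point $(J,C,A_1,A_2,\xi)$ maps under $\widetilde a_{d,b}$ to $(J,D_1,D_2,\xi)$. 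The Higgs bundle $\Psi_d$ of that point is the one associated to $f$, so it lies in $\Hgood$, and $H_{d,b}$ sends the point to $[\rho]$.

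The step needing most care is this last identification. One has to confirm that the extension data produced by the forward direction, after the normalization \eqref{eq:higgs-form}, reproduce under $\Psi_d$ the same point of $\Hgood$ whose image under $\mathcal N$ is $(J,[\rho])$. This is a compatibility between the two directions of Proposition~\ref{prop:extension} and the tautological family, and I would verify it directly by restricting \eqref{eq:tautological-extension} to the fiber $S_J$. The range $b\leq d\leq 2g-3$ then accounts for all cases. This gives \eqref{eq:incidence-cover}.
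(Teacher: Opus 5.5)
Your proposal is correct and follows the paper's proof essentially step for step. Smoothness comes from composing the holomorphic maps $\widetilde a_{d,b}$ and $\Psi_d$ with the smooth map $\mathcal N$. The two inclusions come from choosing a degree-$b$ subdivisor $C\le\gcd(D_1,D_2)$ and applying \eqref{eq:branch-divisor}. The two points you flag are exactly what the paper takes for granted from \cite[Lemma~5.1]{McI23} and Proposition~\ref{prop:extension}: that $\Psi_d$ is smooth as a map into $\Hgood$, and that the two directions of the extension correspondence are compatible with the tautological family.
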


\begin{proof}
The maps $\Psi_d$ and $\widetilde a_{d,b}$ are holomorphic, while
$\mathcal N$ is smooth by Proposition~\ref{prop:analytic-inputs}.
Consequently, the map $H_{d,b}$ constructed in
\eqref{eq:holonomy-map} is smooth.

Take $[\rho]\,\in\,\mathcal B_b$. By the definition of
$\mathcal B_b$ in \eqref{eq:exceptional}, there is a marked
equivariant minimal map $(J,\,\rho,\,f)$ with
$\ram(f)\,\geq\,b$. Let $D_1$ and $D_2$ be the effective
divisors given by Proposition~\ref{prop:extension}, and let $d$
be their common degree. In view of \eqref{eq:d-range} and
\eqref{eq:branch-divisor}, we have
$$
b\ \leq\ \deg\gcd(D_1,\,D_2)\ \leq\ d\ \leq\ 2g-3.
$$
Choose an effective divisor $C$ of degree $b$ such that
$C\,\leq\,\gcd(D_1,\,D_2)$, and set $A_i\,:=\,D_i-C$
for $i\,=\,1,\,2$. If $\xi$ is the extension class in
\eqref{eq:extension-class}, then
$$
(J,\,C,\,A_1,\,A_2,\,\xi)\ \in\
\mathscr F_{d,b}^{\mathrm{good}}.
$$
The image of this point under the map $H_{d,b}$ is $[\rho]$.

Conversely, let $(J,\,C,\,A_1,\,A_2,\,\xi)$ be a point of
$\mathscr F_{d,b}^{\mathrm{good}}$. The corresponding divisors
$D_i\,:=\,C+A_i$ satisfy the condition $C\,\leq\,\gcd(D_1,\,D_2)$.
Therefore, by \eqref{eq:branch-divisor}, the associated minimal
map $f$ satisfies the condition
$$
\ram(f)\ =\ \deg\gcd(D_1,\,D_2)\ \geq\ \deg C\ =\ b.
$$
It follows that $H_{d,b}(J,\,C,\,A_1,\,A_2,\,\xi)$ belongs
to $\mathcal B_b$. This proves \eqref{eq:incidence-cover}.
\end{proof}

\begin{proof}[Proof of Theorem~\ref{thm:ramification}]
Let $b$ be an integer with $1\,\leq\,b\,\leq\,2g-3$.
For each $d$ with $b\,\leq\,d\,\leq\,2g-3$, the space
$\mathscr F_{d,b}^{\mathrm{good}}$ is an open subset of the
complex manifold $\mathscr F_{d,b}$. Hence
Proposition~\ref{prop:incidence} implies that
$$
\dim_{\RR}\mathscr F_{d,b}^{\mathrm{good}}
\ =\ 2(8g-8-b)\ =\ N-2b
$$
whenever this open subset is nonempty, where $N$ is defined in
\eqref{eq:ambient-dim}.

The map $H_{d,b}$ is smooth by Proposition~\ref{prop:holonomy}.
On a sufficiently small relatively compact coordinate
neighborhood, a smooth map is Lipschitz. Since the source is
second countable, it can be covered by countably many such
neighborhoods. A Lipschitz map does not increase Hausdorff
dimension, and the Hausdorff dimension of a countable union is
the supremum of the dimensions of its members. Consequently,
$$
\dim_{\mathrm H}
H_{d,b}(\mathscr F_{d,b}^{\mathrm{good}})
\ \leq\ N-2b.
$$
Using \eqref{eq:incidence-cover}, it follows that
$$
\dim_{\mathrm H}\mathcal B_b\ \leq\ N-2b,
$$
which is \eqref{eq:main-bound}.

To prove the remaining assertions, choose a countable compact
exhaustion $(K_{d,j})_{j\geq1}$ of each
$\mathscr F_{d,b}^{\mathrm{good}}$. Such an exhaustion exists
because this space is a second-countable finite-dimensional
manifold. Equation~\eqref{eq:incidence-cover} now gives
$$
\mathcal B_b\ =\
\bigcup_{d=b}^{2g-3}\ \bigcup_{j\geq1}H_{d,b}(K_{d,j}).
$$
Each set $H_{d,b}(K_{d,j})$ is compact, and its Hausdorff
dimension is at most $N-2b$, which is strictly less than $N$.
It therefore has zero volume for every smooth Riemannian metric
on $\X$. It also has empty interior, since every nonempty open
subset of an $N$-dimensional smooth manifold has Hausdorff
dimension $N$. Being compact, it is closed and hence it is nowhere
dense. Thus $\mathcal B_b$ is a countable union of compact null
sets, and it is $F_\sigma$, null, and meagre.

Finally, for any marked equivariant minimal map with holonomy
in $\X$, Proposition~\ref{prop:extension} gives the following:
$$
\ram(f)\ =\ \deg\gcd(D_1,\,D_2)\ \leq\ d\ \leq\ 2g-3.
$$
Therefore $\mathcal B_b\,=\,\varnothing$ for $b\,>\,2g-3$.
\end{proof}

\begin{proof}[Proof of Corollary~\ref{cor:generic-immersion}]
By Theorem~\ref{thm:ramification}, the set $\mathcal B_1$ is
$F_\sigma$ and null in $\X$. Its complement is therefore
$G_\delta$ and has full measure. Every nonempty open subset
of $\X$ has positive smooth volume, so this complement is
dense. For $[\rho]\,\in\,\X\setminus\mathcal B_1$, the
definition of $\mathcal B_1$ excludes every marked equivariant
minimal map with positive total ramification. Consequently,
every such map is an immersion, for every
$J\,\in\,\Teich$.
\end{proof}

\section{Energy, compactness, and generic finiteness}
\label{sec:energy}

Recall the energy function $E_\rho$ in \eqref{eq:energy} and
the convex cocompact locus $\CC$ in \eqref{eq:cc-locus}.
For $[\rho]\,\in\,\X\setminus\mathcal B_1$, every critical
point of $E_\rho$ determines a minimal immersion. 

\begin{proposition}
\label{prop:energy}
Let $[\rho]\,\in\,\X\setminus\mathcal B_1$ and
$J\,\in\,\Crit(E_\rho)$. Then
\begin{equation}
\label{eq:area-bound}
E_\rho(J)\ =\ \Area(f_{J,\rho})\ \leq\ 4\pi(g-1).
\end{equation}
If $[\rho]\,\in\,\CC\setminus\mathcal B_1$, then
$\Crit(E_\rho)$ is compact and nonempty. Moreover, the map
\begin{equation}
\label{eq:critical-fiber-homeo}
\Crit(E_\rho)\ \longrightarrow\
\pi_{\mathrm{imm}}^{-1}([\rho]),\qquad
J\ \longmapsto\ (J,\,[\rho])
\end{equation}
is a homeomorphism.
\end{proposition}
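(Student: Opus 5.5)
The plan has three parts: the area bound via Gauss--Bonnet, compactness and nonemptiness of $\Crit(E_\rho)$ via properness of the energy on the convex cocompact locus, and the homeomorphism via the subspace topology on $\Mimm$.

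\emph{Area bound.} Let $J\in\Crit(E_\rho)$. By Remark~\ref{eq:critical-conformal}, $f=f_{J,\rho}$ is weakly conformal. Since $[\rho]\notin\mathcal B_1$, $f$ is an immersion. For a conformal map, $|df|^2\dd A_{\sigma_J}=2\,\dd A_{f^*g_{\CH}}$, so $E_\rho(J)=\Area(f)$.

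To bound the area, I will use the Gauss equation for a minimal surface. Minimality gives $K_{f^*g}\leq K_{\CH}(Tf)\leq -1$, since the second fundamental form contributes $-\tfrac12|\mathrm{II}|^2\leq 0$ and the ambient sectional curvatures lie in $[-4,-1]$. Gauss--Bonnet on the closed surface $S$ with the induced metric then gives
\[
-4\pi(g-1)\ =\ \int_S K_{f^*g}\dd A\ \leq\ -\Area(f),
\]
which is \eqref{eq:area-bound}.

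\emph{Compactness and nonemptiness.} For convex cocompact $\rho$, I will show that $E_\rho$ is proper on $\Teich$. If $J_n$ leaves every compact subset, then by Mumford compactness some simple closed curve $\gamma$ has hyperbolic length $\ell_{J_n}(\gamma)\to 0$. A collar of modulus tending to infinity then surrounds $\gamma$. Convex cocompactness makes $\rho(\gamma)$ loxodromic with translation length $\ell_\rho(\gamma)>0$. The standard collar estimate then gives
\[
E_\rho(J_n)\ \geq\ c\,\frac{\ell_\rho(\gamma)^2}{\ell_{J_n}(\gamma)}\ \longrightarrow\ \infty,
\]
as in the Goldman--Wentworth / Labourie argument. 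Since $E_\rho$ is continuous (by the smoothness in Proposition~\ref{prop:analytic-inputs}) and proper, it attains a minimum, so $\Crit(E_\rho)\neq\varnothing$. The sublevel set $\{E_\rho\leq 4\pi(g-1)\}$ is compact. By the area bound, $\Crit(E_\rho)$ is a closed subset of it (the zero set of the continuous Hopf differential map), hence compact.

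\emph{Homeomorphism.} The map in \eqref{eq:critical-fiber-homeo} is well defined: for $[\rho]\notin\mathcal B_1$, every critical point gives a minimal immersion by Remark~\ref{eq:critical-conformal}. It is injective, and it is surjective because every point of $\pi_{\mathrm{imm}}^{-1}([\rho])$ is a pair $(J,[\rho])$ with $f_{J,\rho}$ conformal, hence critical. With the subspace topology from $\Teich\times\X$, the map is the restriction of $J\mapsto(J,[\rho])$, a homeomorphism onto the slice $\Teich\times\{[\rho]\}$. Its inverse is the restriction of the projection, so the map is a homeomorphism.

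The main obstacle is properness of $E_\rho$. I would invoke the known properness result for convex cocompact (Anosov) holonomy in rank-one targets, rather than reprove the collar lemma in $\CH$. If that proves delicate, the fallback is a direct argument using a $\rho$-equivariant Lipschitz map to the convex core together with the lower bound on translation lengths.
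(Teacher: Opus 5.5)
Your proposal is correct and follows the paper's proof essentially step for step. The Gauss equation plus minimality gives $K_h\le -1$, and Gauss--Bonnet then gives the area bound. Properness of $E_\rho$ (which the paper simply cites from Goldman--Wentworth, as you ultimately propose to do) yields a compact sublevel set that contains the closed set $\Crit(E_\rho)$ and a global minimum. The bijection is the restriction of the inclusion, with inverse the restriction of the projection.

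One caution about your heuristic sketch of properness. Mumford compactness concerns moduli space, not $\Teich$. The orbit $\phi^n\cdot J$ of an infinite-order mapping class (e.g.\ a Dehn twist) leaves every compact subset of $\Teich$ while its systole stays constant, so the collar estimate alone cannot give properness. One also needs two further bounds: on the thick part, energy bounds $\ell_\rho(\gamma)$ above by $C\sqrt{E_\rho(J)}\,\ell_J(\gamma)$, and convex cocompactness bounds $\ell_\rho$ below by a multiple of word length. This is why citing the theorem is the right move.
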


\begin{proof}
Let $f\,:=\,f_{J,\rho}$. Since
$J\,\in\,\Crit(E_\rho)$, Remark~\ref{eq:critical-conformal}
shows that $f$ is weakly conformal. It is nonconstant by
irreducibility, and it has no branch points because
$[\rho]\,\notin\,\mathcal B_1$. Therefore the induced metric
$h\,:=\,f^*g_{\CH}$ descends to a Riemannian metric on
$S$. 

Let $\mathrm{II}$ denote the second fundamental form
of $f$, and let $\{e_1,\ e_2\}$ be a local
$h$-orthonormal frame on $S$. The Gauss equation gives
\[
K_h\ =\ K_{\CH}(df(TS))
  +\langle \mathrm{II}(e_1,\ e_1),
           \mathrm{II}(e_2,\ e_2)\rangle -\|\mathrm{II}(e_1,\ e_2)\|^2.\]
Since $f$ is minimal, its mean curvature vector
vanishes, and hence
$\mathrm{II}(e_1,\ e_1)+\mathrm{II}(e_2,\ e_2)=0.
$ Consequently, using the symmetry of $\mathrm{II}$,
we obtain the following:
\[
K_h\ =\ K_{\CH}(df(TS))
-\frac12\|\mathrm{II}\|_h^2.
\]
Since the sectional curvatures of $\CH$ lie in the interval
$[-4,-1]$, and $\|\mathrm{II}\|_h^2\,\geq\, 0$, it follows that $K_h\,\leq\, -1.$  For a conformal map, the energy in \eqref{eq:energy} coincides with
the area \cite{DW07}. This proves \eqref{eq:area-bound}.

\medskip

Now assume that $[\rho]\,\in\,\CC\setminus\mathcal B_1$.
By \cite[Theorem~A]{GW07}, the function
$E_\rho\,:\,\Teich\,\longrightarrow\,\RR$ is proper.
Consequently, the sublevel set
$$
E_\rho^{-1}\bigl([0,\,4\pi(g-1)]\bigr)
$$
is compact. By \eqref{eq:area-bound}, it contains every
critical point of $E_\rho$. The critical set is closed,
since $E_\rho$ is smooth. It now follows that
$\Crit(E_\rho)$ is compact.

To see that this set is nonempty, choose
$J_0\,\in\,\Teich$. The sublevel set
$$
E_\rho^{-1}\bigl([0,\,E_\rho(J_0)]\bigr)
$$
is nonempty and compact. The restriction of $E_\rho$ to this
set attains its minimum, and every point outside the set has
larger energy. This minimum is therefore a global minimum on
$\Teich$, so it is a critical point of $E_\rho$.

Finally, by Remark~\ref{eq:critical-conformal}, and the absence
of branch points, the points of $\Crit(E_\rho)$ are precisely
the complex structures $J$ for which
$(J,\,[\rho])\,\in\,\Mimm$. Thus the map in
\eqref{eq:critical-fiber-homeo} is a bijection. This map is
the restriction of the inclusion map
$J\,\longmapsto\,(J,\,[\rho])$, and its inverse is the
restriction of the projection to $\Teich$. Both are continuous
for the subspace topologies, so the bijection is a
homeomorphism.
\end{proof}

\begin{proof}[Proof of Theorem~\ref{thm:finiteness}]
Consider the holonomy projection
$$
\pi_{\mathrm{imm}}\ :\ \Mimm\ \longrightarrow\ \X
$$
in \eqref{eq:immersion-projection}. By
Proposition~\ref{prop:analytic-inputs}, it is smooth and its domain has complex dimension $8(g-1)$. Hence
$$
\dim_{\RR}\Mimm\ =\ 16g-16\ =\ \dim_{\RR}\X\ =\ N.
$$

Let $Z\,\subset\,\Mimm$ be the set of critical points of
$\pi_{\mathrm{imm}}$, and let
$\mathcal S\,:=\,\pi_{\mathrm{imm}}(Z)$ be its set of
critical values. Sard's theorem implies that $\mathcal S$
is null in $\X$. The set $Z$ is closed, because the
differential of $\pi_{\mathrm{imm}}$ is continuous. Choose
a countable compact exhaustion $(L_j)_{j\geq1}$ of the
second-countable manifold $\Mimm$. Then
$$
\mathcal S\ =\
\bigcup_{j\geq1}\pi_{\mathrm{imm}}(Z\cap L_j).
$$
Each subset in this union is compact and null. It has empty
interior because every nonempty open subset of $\X$ has
positive smooth volume. It is therefore nowhere dense.
Consequently, $\mathcal S$ is $F_\sigma$, null, and meagre
in $\X$.

Define
\begin{equation}
\label{eq:generic-set}
\mathcal G\ :=\
\CC\setminus(\mathcal B_1\cup\mathcal S),
\end{equation}
where $\CC$ is defined in \eqref{eq:cc-locus}.
Theorem~\ref{thm:ramification} shows that $\mathcal B_1$ is
also $F_\sigma$, null, and meagre. Since $\CC$ is open in
$\X$, the subset $\mathcal G$ is $G_\delta$ relative to $\CC$
and has full measure there. Every nonempty relatively open
subset of $\CC$ is open in $\X$ and has positive smooth
volume. Hence $\mathcal G$ is actually dense in $\CC$.

Let $[\rho]\,\in\,\mathcal G$. Since
$[\rho]\,\notin\,\mathcal B_1$, every nonconstant
equivariant minimal map with this holonomy is an immersion.
Since $[\rho]\,\notin\,\mathcal S$, it is a regular value
of $\pi_{\mathrm{imm}}$. The source and target have the
same real dimension $N$, so the regular-value theorem gives
that $\pi_{\mathrm{imm}}^{-1}([\rho])$ is a
zero-dimensional smooth manifold. In particular, this fiber
is discrete.

By Proposition~\ref{prop:energy}, this fiber is homeomorphic to the
compact nonempty set $\Crit(E_\rho)$. It is therefore
compact, discrete, and nonempty. A compact discrete space
is finite: the cover by its open singletons has a finite
subcover. Thus
$$
1\ \leq\ \#\Crit(E_\rho)
\ =\ \#\pi_{\mathrm{imm}}^{-1}([\rho])\ <\ \infty,
$$
which proves \eqref{eq:finite-fiber}.
\end{proof}

\end{document}